\def\maxwidth{ %
  \ifdim\Gin@nat@width>\linewidth
    \linewidth
  \else
    \Gin@nat@width
  \fi
}
\definecolor{fgcolor}{rgb}{0.345, 0.345, 0.345}
\definecolor{shadecolor}{rgb}{.97, .97, .97}
\definecolor{messagecolor}{rgb}{0, 0, 0}
\definecolor{warningcolor}{rgb}{1, 0, 1}
\definecolor{errorcolor}{rgb}{1, 0, 0}
\newtheorem{theom}{Theorem}[section]
\newtheorem{thm}{Theorem}
\newtheorem{defn}[thm]{Definition}
\newtheorem{prop}[theom]{Proposition}
\newtheorem{Rem}[theom]{Remark}
\newcommand{\ds}{\displaystyle}
\newcommand{\norm}[1]{\left\Vert#1\right\Vert}
\newcommand{\abs}[1]{\left\vert#1\right\vert}
\newcommand{\set}[1]{\left\{#1\right\}}
\newcommand{\rb}[1]{\left(#1\right)}
\newcommand{\sbb}[1]{\left[#1\right]}
\newcommand{\R}{\mathbb{R}}
\newcommand{\C}{\mathbb{C}}
\newcommand{\Z}{\mathbb{Z}}
\newcommand{\Xint}{\int_{\mathbb{X}}}
\newcommand{\XXint}{\int_{\mathbb{X}^2}}
\def\Xint#1{\mathchoice
{\XXint\displaystyle\textstyle{#1}}%
{\XXint\textstyle\scriptstyle{#1}}%
{\XXint\scriptstyle\scriptscriptstyle{#1}}%
{\XXint\scriptscriptstyle\scriptscriptstyle{#1}}%
\!\int}
\def\XXint#1#2#3{{\setbox0=\hbox{$#1{#2#3}{\int}$ }
\vcenter{\hbox{$#2#3$ }}\kern-.6\wd0}}
\def\dashint{\Xint-}
\newcommand{\aint}{\dashint}
\newcommand{\DD}{\mathbb D}
\newcommand{\TT}{\mathbb T}
\newcommand{\HH}{\mathbb H}
\newcommand{\N}{\mathbb N}
\numberwithin{equation}{section}
\numberwithin{equation}{section}
\begin{document}
\title{On the  equivalence between Weak BMO and the space of derivatives of the Zygmund class}
\author{
Eddy Kwessi\footnote{Department of Mathematics, Trinity University, San Antonio, TX, USA, ekwessi@trinity.edu}
 }

\date{}
\maketitle

\begin{abstract}

In this paper, we will discuss the  space of functions of weak bounded mean oscillation.  In particular, we will show that this space is  the dual space of the special atom space, whose dual space was already known to be the space of derivative of functions (in the sense of distribution) belonging to the Zygmund class of functions. We show in particular that this  proves that the Hardy space $H^1$ strictly contains the space special atom space.



 \end{abstract}

\noindent{\bf Keywords:} BMO, Derivative, Distributions, Zygmund Class \vspace{0.25cm} 

\noindent AMS Subject Classification: 42B05, 42B30, 30B50, 30E20

\section{Introduction}
The space of functions of bounded means oscillation has taken central stage in the mathematical literature after the work of Charles Fefferman \cite{Fefferman1971} where he showed that it is the dual space of the real Hardy space $\HH^1$, a long sought after result. Right after, Ronald Coifman \cite {Coifman1974} showed this result  using a different method. Essentially, he showed that $\HH^1$ has an atomic decomposition.  De Souza \cite{DeSouza1980} showed there is a subset $B^1$ of $\HH^1$, formed by special atoms that is contained in $\HH^1$.  This space $B^1$ has the particularity that it contains some  functions whose Fourier series diverge. The question of whether $B^1$ is equivalent to $\HH^1$ was never truly answered explicitly but it was always suspected that the inclusion $B^1\subset \HH^1$ was strict, that is, there must be at least one function in $\HH^1$ that is not in $B^1$. However, such a function had never being constructed nor given. Since the dual space $(\HH^1)^*$ of $\HH^1$ is $BMO$ and $B^1\subset \HH^1$, it follows that the dual space $(B^1)^*$ of $B^1$ must be a superset of $BMO$. A natural superset candidate  of $BMO$ is therefore the space $BMO^w$ since $BMO\subset BMO^w$. So essence, that $BMO^w$ is the dual of $B^1$ would  also prove that $B^1\subset \HH^1$ with a strict inclusion.  Moreover, it was already proved that $(B^1)^*\cong \Lambda_*'$, where $ \Lambda_*'$ is  the space of derivative (in the sense of distributions) of functions in the Zygmund class $\Lambda_*$, see for example \cite{DeSouza1980} and \cite{Zygmund2002}. Therefore, if $(B^1)^*\cong BMO^w$, then by transition we would have $\Lambda'_*\cong BMO^w$. \\
Henceforth, we will adopt the following notations: $\DD=\set{z\in \C: \abs{z}<1}$ is the open unit disk and let $\TT=\set{z\in \C: \abs{z}=1}$ is the unit sphere. For an integrable function $f$ on a measurable set $A$, and the Lebesgue measure $\lambda$ on $A$, we will write $\ds \aint_A f(\xi)d\lambda(\xi):=\frac{1}{\lambda(A)} 
\int_A f(\xi)d\lambda(\xi)$. We will start  by recalling the  necessary definitions and important results. The interested reader can see for example \cite{Gill2017} for more information.\\
\begin{defn}
Let $0<p<\infty$ be a real number.  The Hardy Space $\HH^p:=\HH^p(\DD)$ is the space of holomorphic functions $f$ defined on $\DD$ and satisfying 
\[ \norm{f}_{\HH^p}:=\sup_{0\leq r<1}\left(\frac{1}{2\pi}\int_0^{2\pi} \abs{f(re^{i\xi})}^p d\lambda(\xi) \right)^{\frac{1}{p}}<\infty\;. \]
\end{defn}
  \noindent  Let $\ds f\in L^1_{\mbox{loc}}(\R^n),  Q$ be a hypercube in $\R^n$,  and $\lambda$  be the Lebesgue measure on $\R^n$ for some $n\in \N$.\\
Put \[ \ds f^{\#} _{Q}=\aint_Q f(\xi)d\lambda(\xi),\quad f_{Q}=\abs{\aint_Q f(\xi)d\lambda(\xi)} \;.\]
For $\ds f\in L^1_{\mbox{loc}}(\R^n)$ and $x\in \R^n$, we define
\begin{eqnarray}
\ds M^{\#} (f)(x)&=&\sup_{Q\ni x} \;\aint_Q \abs{f(\xi)-f^{\#}_Q}d\lambda(\xi)\\
 M(f)(x)&=&\sup_{Q\ni x}\; \abs{\aint_Q [f(\xi)-f_Q]d\lambda(\xi)} \\
 mf(x)&=& \sup_{Q\ni x}\; f_Q\;.
 \end{eqnarray}
where the supremum is taken over all hypercubes $Q$ containing $x$. 
Now we can define the space of functions of  bounded mean oscillation and its weak counterpart.
\begin{defn} The space of functions of bounded mean oscillation is defined as the space of locally integrable functions $f$ for which the operator $M^{\#}$ is bounded, that is, 
\[BMO(\R^n)=\set{f\in L^1_{\mbox{loc}}(\R^n): M^{\#}(f) \in L^{\infty}(\R^n)}\;.\]
We can endowed $BMO(\R^n)$ with the norm \[\norm{f}_{BMO(\R^n)}=\norm{M^{\#}(f)}_{\infty}=M^{\#}(f)(x)\;.\]

The space of functions of weak bounded mean oscillation is defined as the  space of locally integrable functions $f$ for which the operator $M$ is bounded, that is, 
\[BMO^w(\R^n)=\set{f\in L^1_{\mbox{loc}}(\R^n): Mf \in L^{\infty}(\R^n)}\;.\]
\end{defn}
\begin{Rem} It  follows from the above definitions that $BMO(\R^n)\subseteq BMO^w(\R^n)$. 
\end{Rem}
\noindent Let us recall the definition of the space of functions of vanishing mean oscillation $VMO(\R^n)$ and introduce the space of functions of weak vanishing mean oscillations $VMO^w(\R^n)$. 
\begin{defn} Let $f\in L^1_{loc}(\R^n)$. \\
\[ \mbox{ $f\in VMO(R^n)$ if }\quad  \lim_{\lambda(Q)\to 0}\; \aint_Q \abs{f(\xi)-f_Q}d\lambda(\xi)=0\;.\]
\[ \mbox{$f\in VMO^w(\R^n)$ if  } \quad \lim_{\lambda(Q)\to 0}\; \abs{\aint_Q[f(\xi)-f_Q]d\lambda(\xi)}=0\;.\]
\end{defn}
\begin{Rem} It follows from the above definition that $VMO(\R^n)$ is a subspace of  $VMO^w(\R^n)$ which is itself a subspace of $BMO^w(\R^n)$. We will show (see Theorem \ref{theo7} below) that $VMO^w$ is in fact closed subspace of $BMO^w$.
\end{Rem}
\noindent Henceforth, $BMO(\R^n), BMO^w(\R^n)$, and $VMO^w(\R^n)$ will simply be referred to as $BMO, BMO^w$, and $VMO^w$.

Now we consider $A(\DD)$ as the space of analytic functions defined on the unit disk $\DD$. Following the work of Girela in \cite{Girela1999} on the space of analytic functions of bounded means oscillations, we introduce their weak counterpart.  Before, we recall that the Poisson Kernel is defined as \[\ds P_r(\theta)=\mbox{Re}\left( \frac{1+re^{i\theta}}{1-re^{i\theta}}\right)\;.\]

\begin{defn}The space of  analytic functions of bounded mean oscillation is defined as 
\[\ds BMOA(\DD)=\set{F\in A(\DD); \exists f\in BMO(\TT): F(re^{i\theta})=\frac{1}{2\pi}\int _{\TT}P_r(\theta-\xi)f(e^{i\xi})d\lambda(\xi)}\;.\]
We can endowed $BMO(\DD)$ with the norm 
\[\norm{F}_{BMOA(\DD)}:=\abs{F(0)}+\sup_{\underset{\theta \in \TT}{0\leq r<1}}\; \left(\frac{1}{2\pi}\int _{\TT}P_r(\theta-\xi)\abs{f(e^{i\xi})-F(re^{i\theta})}d\lambda(\xi)\right)<\infty\;.\]
 In other words, $BMOA(\DD)$ is the space of Poisson integrals of functions in $BMO(\TT)$.
 \end{defn}
 \noindent We can now define the space $BMOA^w$ of analytic function of weak bounded mean oscillation.
 \begin{defn}
 An analytic   function $F$ on $\DD$ is said to be of weak bounded mean oscillation if there exists $f\in BMO^w(\TT)$ such that 
 \[ F(re^{i\theta})=\frac{1}{2\pi}\int _{\TT}P_r(\theta-\xi)f(e^{i\xi})d\lambda(\xi)\;.\]
 We endow $BMOA^w(\DD)$ with the norm
\[\norm{F}_{BMOA^w(\DD)}:=\abs{F(0)}+\sup_{\underset{\theta \in \TT}{0\leq r<1}}\; \left(\frac{1}{2\pi}\abs{\int _{\TT}P_r(\theta-\xi) \sbb{f(e^{i\xi})-F(re^{i\theta})}d\lambda(\xi)}\right)<\infty\;.\]
 \end{defn}
\noindent  We recall the definition of  special atom space  $B^1$, see \cite{Kwessi2019}\;.

 \begin{defn}
For $n\geq 1$, we consider the hypercube of $\R^n$  given as $\ds J=\prod_{j=1}^n[a_j-h_j,a_j+h_j]$ where $a_j, h_j$ are real numbers with $h_j>0$. Let  $\phi \in L^1(J)$ with $\ds \phi(J)=\int_J \phi(\xi)d\lambda(\xi)$\;. \\ The  special atom (of {\bf type 1}) is a function $b:I\subseteq J\to \mathbb{R}$ such that
\begin{eqnarray*}
b(\xi)&=& 1 ~ \textrm{on $J\backslash I$ or }\\
b(\xi)&=&\dfrac{1}{\phi(I)}\sbb{\chi_{_R}(\xi)-\chi_{_L}(\xi)}\;,
\end{eqnarray*}
where  
\begin{itemize}
\item $\ds R=\bigcup _{j=1}^{2^{n-1}}I_{i_j}$ for some $i_1, i_2, \cdots, i_{2^{n-1}}\in  \set{1,2, \cdots, 2^n}$ with $i_1<i_2<\cdots <i_{2^{n-1}}$ and $L=I\backslash R$\;. 
\item $\set{I_1, I_2,\cdots, I_{2^n}}$ is the collection of sub-cubes of $I$, cut by the hyperplanes $x_1=a_1, x_2=a_2, \cdots, x_n=a_n$. 
\item $\chi_A$ represents the characteristic function of set $A$\;.
\end{itemize}
\end{defn}
\begin{defn}
\noindent The  special atom space is defined as 
 
 \[B^1=\set{f:J\to \mathbb{R}; f(\xi)=\sum_{n=0}^{\infty} \alpha_n b_n(\xi); \sum_{n=0}^{\infty} \abs{\alpha_n}<\infty}\;,\]
where the $b_n$'s are special atoms of  {\bf type 1}.  \\
\noindent $B^1$ is endowed with the norm 
$\ds \norm{f}_{B^1}=\inf \sum_{n=0}^{\infty}\abs{\alpha_n},\quad \mbox{where the infimum is taken over all representations of $f$}\;. $
\end{defn}

\begin{figure}[H]
\begin{center}
 \resizebox*{15cm}{!}{
\begin{tabular}{cc}
{\bf (a)} & {\bf (b)}\\
\centering\includegraphics[scale=0.8]{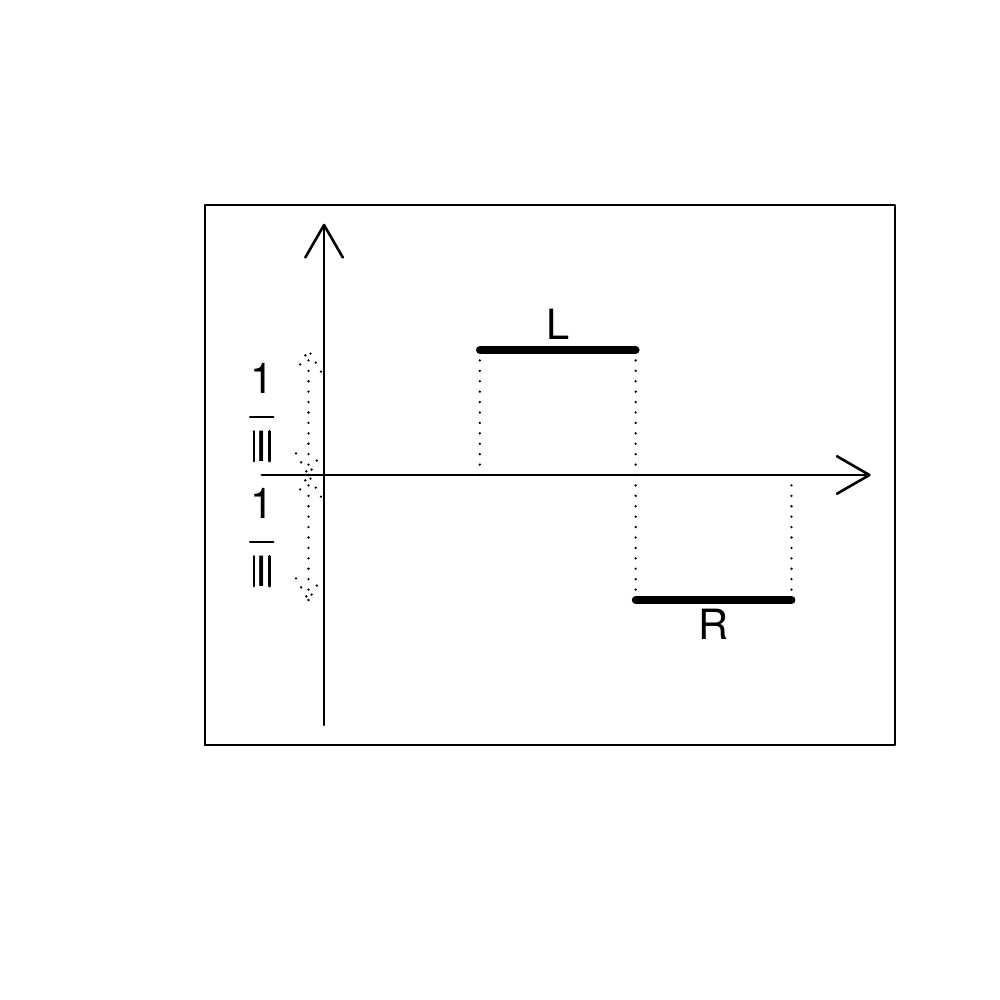} & \includegraphics[scale=0.45]{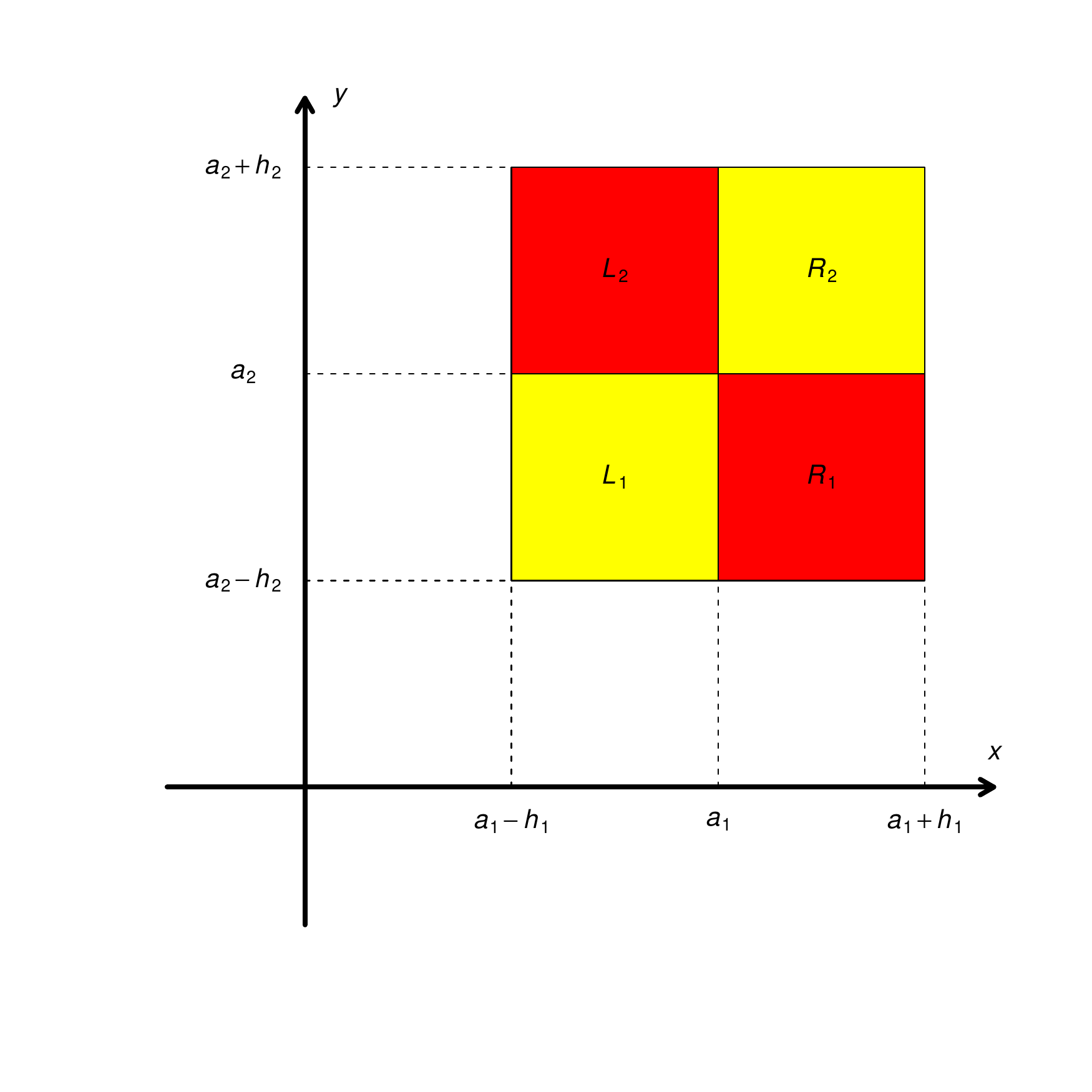}\\
{\bf (c)} & \\
 \includegraphics[scale=0.5]{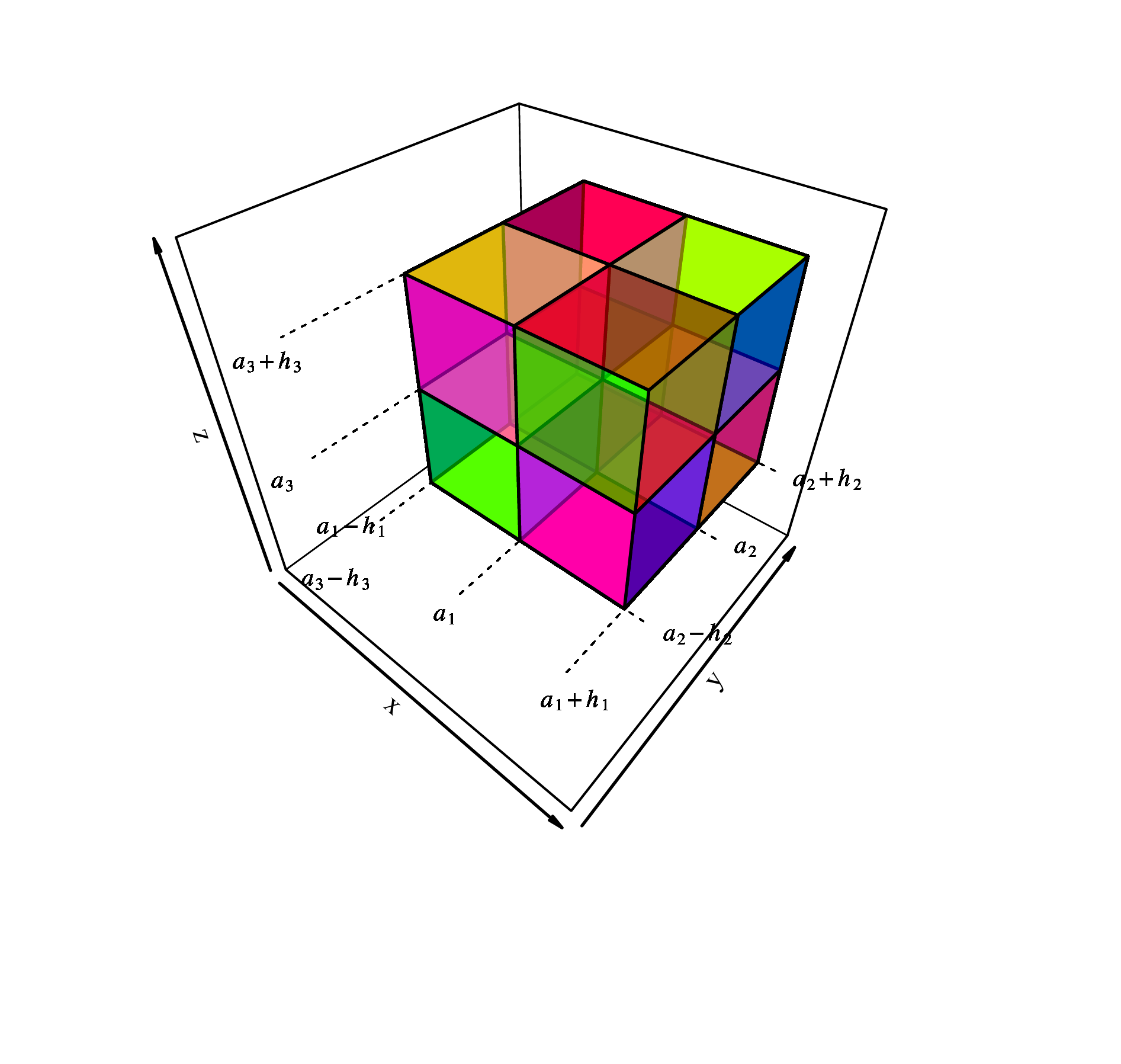}
 \end{tabular}
 }
 \end{center}
 \caption{Illustration of the the special atom, for $n=1$ in {\bf (a) }, $n=2$ in {\bf (b)}, and $n=3$ in {\bf (c)}.}
 \label{fig1}
\end{figure}

\noindent Now we define the Zygmund class of functions.
\begin{defn}  Let $k\in \N$. A function $f$ is said to be in the Zygmund class  $\Lambda_*^k(\R^n)$ of functions of order $k$ if  $f\in C^{k-1}(\R^n)$ and 
\[\norm{f}_{\Lambda_*^k(\R^n)}=\sum_{\abs{\alpha}=k} \sup_{x,h}\; \frac{\abs{\partial^{\alpha} f(x+h) + \partial^{\alpha} f(x-h)-2\partial^{\alpha} f(x)}}{\abs{h}}<\infty\;.\]
In particular, for $k=1$, we have $\Lambda_*:=\Lambda_*^1(\R^n)$,  and hence
\[ \Lambda_*=\set{f\in C^0(\R^n): \norm{f}_{\Lambda_*}:=\sup_{x,h>0}\; \frac{\abs{f(x+h)+f(x-h)-2f(x )}}{2h}<\infty}\;.  \]
\end{defn}
\noindent One important note about the space $\Lambda_*$ is that it contains the so-called Weierstrass functions that are known to be continuous everywhere but nowhere differentiable. Therefore, the space $\Lambda_*^k$ is the space of derivatives of functions in  $\Lambda_*^{k-1}$, where the derivative is taken in the sense of distributions. 
Another equivalent way to see $\Lambda_*^k$ is to consider functions of $\Lambda_*^{k-1}$ that are either differentiable or limits of convolutions with the Poisson kernel, that is, $\ds f(\xi)=\lim_{r\to 1}(f*P_r)(\xi)$ where $P_r(\theta)$ is the Poisson kernel. 
\section{Main Results}


Our first result is about the constant $f_Q$ in the definition of $BMO^w$. If fact, the constant $f_Q$ can be replaced with any non-negative constant. The same can be said about  $BMO$ as well.
\begin{prop} \label{prop1} Let $f\in L^1_{loc}$\;.
\begin{enumerate}
\item[(1)] For any non-negative  real number $\alpha$, we have 
\begin{equation}\label{eqn2.1}
\sup_{Q\ni x}\; \abs{\aint_Q[f(\xi)-f_Q]d\lambda(\xi)} \leq2 \sup_{Q\ni x}\;\abs{\aint_Q [f(\xi)-\alpha]d\lambda(\xi)}\;.
\end{equation} 
\item[(2)] $f\in BMO^w$  if and only if  for any $x\in \R^n$ and  any cube $Q\ni x$, there exists  is a non-negative number $\alpha \in Q$ such that 
\begin{equation} \label{eqn2.2}
\sup_{Q\ni x}\; \abs{\aint_Q[f(\xi)-\alpha]d\lambda(\xi)}<\infty.
\end{equation}
\end{enumerate}
\end{prop}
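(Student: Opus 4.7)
The plan is to prove both parts by reducing everything to an elementary inequality on real numbers, exploiting the key observation that the constant $f_Q$ appearing in the definition of $BMO^w$ is precisely $\abs{A_Q}$, where I write $A_Q := \aint_Q f(\xi)\,d\lambda(\xi)$ for brevity.

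For part (1), I would begin by noting that
\[ \aint_Q [f(\xi)-f_Q]\,d\lambda(\xi) = A_Q - \abs{A_Q}, \qquad \aint_Q [f(\xi)-\alpha]\,d\lambda(\xi) = A_Q - \alpha, \]
so that inequality (\ref{eqn2.1}) reduces to the cube-by-cube estimate $\abs{A_Q - \abs{A_Q}} \leq 2\abs{A_Q - \alpha}$ valid for every non-negative $\alpha$. I would verify this by splitting on the sign of $A_Q$: if $A_Q \geq 0$ then the left-hand side vanishes and there is nothing to prove, while if $A_Q < 0$ then the left-hand side equals $2\abs{A_Q}$ and the right-hand side is $2(\alpha - A_Q) = 2(\alpha + \abs{A_Q}) \geq 2\abs{A_Q}$, because $\alpha \geq 0$ forces the two contributions to add rather than cancel. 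Taking the supremum over all cubes $Q \ni x$ on both sides preserves this pointwise (in $Q$) estimate and yields (\ref{eqn2.1}).

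For part (2), the forward implication is essentially automatic: if $f \in BMO^w$, then for each cube $Q \ni x$ I would take $\alpha := f_Q = \abs{A_Q}$, which is non-negative, so that the left-hand side of (\ref{eqn2.2}) becomes exactly $M(f)(x)$, and this is finite (in fact in $L^\infty$) by definition of $BMO^w$. For the converse, I would observe that the cube-by-cube inequality established in part (1) only requires $\alpha \geq 0$ and therefore remains valid if one allows $\alpha$ to depend on $Q$; consequently, whenever (\ref{eqn2.2}) is finite for some non-negative choice of $\alpha$ associated to each cube, $M(f)(x)$ is bounded above by twice this quantity uniformly in $x$, placing $f$ in $BMO^w$.

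There is no real obstacle here beyond bookkeeping the possible $Q$-dependence of $\alpha$ in the statement of (2), which is naturally accommodated by the constructive choice $\alpha = f_Q$ used in the forward direction; the whole argument reduces to the elementary inequality $\abs{a - \abs{a}} \leq 2\abs{a - \alpha}$ for real $a$ and non-negative $\alpha$, combined with the monotonicity of the supremum.
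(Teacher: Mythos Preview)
Your proof is correct and reaches the same cube-by-cube estimate as the paper, namely
\[
\bigl|\,A_Q - |A_Q|\,\bigr| \;\le\; 2\,\bigl|A_Q - \alpha\bigr|\qquad (\alpha\ge 0),
\]
after which both arguments finish by taking the supremum over $Q\ni x$. The only difference is in how this elementary inequality is verified. The paper adds and subtracts $\alpha$, applies the triangle inequality, and then controls the second term via the reverse triangle inequality $\bigl|\,\alpha - f_Q\,\bigr| = \bigl|\,|\alpha| - |A_Q|\,\bigr| \le |\alpha - A_Q|$, using $\alpha\ge 0$ to identify $\alpha=|\alpha|$. You instead split on the sign of $A_Q$: when $A_Q\ge 0$ the left side vanishes, and when $A_Q<0$ the non-negativity of $\alpha$ forces the two pieces on the right to add rather than cancel. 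Your case analysis makes the role of $\alpha\ge 0$ more transparent; the paper's reverse-triangle argument is marginally slicker and would extend verbatim if $A_Q$ were complex. For part~(2) the paper simply declares that it ``follows immediately from (1)''; your write-up spells out both directions, correctly noting that the forward implication uses the specific choice $\alpha=f_Q\ge 0$ and that the converse uses the pointwise-in-$Q$ nature of the estimate in (1), which permits $\alpha$ to depend on $Q$.
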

\begin{proof}
(1) To prove assertion (1), fix $x\in \mathbb{R}^n$ and a cube  $Q\subseteq \R^n$ containing $x$. 
Observe that for every non-negative $\alpha$ we have,


\[\abs{\aint_Q[f(\xi)-f_Q]d\lambda(\xi)}\leq 2 \abs{\aint_Q [f(\xi)-\alpha]d\lambda(\xi)}\;.\]
Indeed, for any non-negative  real number $\alpha$,  we have 
\begin{eqnarray*}
\abs{\aint_Q[f(\xi)-f_Q]d\lambda(\xi)}&=& \abs{\aint_Q [f(\xi)-\alpha]d\lambda(\xi)+\aint_Q [\alpha-f_Q]d\lambda(\xi)}\\
&\leq & \abs{\aint_Q [f(\xi)-\alpha]d\lambda(\xi)}+\abs{\aint_Q [\alpha-f_Q]d\lambda(\xi)}\\
&\leq & \abs{\aint_Q [f(\xi)-\alpha]d\lambda(\xi)}+\abs{\abs{\aint_Q \alpha d\lambda(\xi)}-\abs{\aint_Q f(\xi)d\lambda(\xi)}}\\
&\leq & \abs{\aint_Q [f(\xi)-\alpha]d\lambda(\xi)}+\abs{\aint_Q [\alpha- f(\xi)]d\lambda(\xi)}\\
&\leq &2\abs{\aint_Q [f(\xi)-\alpha]d\lambda(\xi)}\;.\\
\end{eqnarray*}
To conclude, we taking the supremum over of all cubes $Q\ni x$. \\

(2) Assertion (2) follows immediately from (1).
\end{proof}

\noindent We  can define two equivalent  norms on $BMO^w$ and prove that endowed with these norms, $BMO^w$ is in fact a Banach space.
\begin{prop} Consider the following: For a every $f\in BMO^w$, we put \[\ds \norm{f}_{BMO^w}=\norm{mf}_{\infty}+\norm{Mf}_{\infty} \quad \mbox{and \quad $\ds \norm{f}^*_{BMO^w}=\norm{mf}_{\infty}+2\sup_{Q\ni x}\; \inf_{\alpha>0}\; \abs{ \aint_Q [f(\xi)-f_Q]d\lambda(\xi)}$\;.}\]
Then 
\[\norm{f}_{BMO^w}\leq \norm{f}^*_{BMO^w}\leq 2\norm{f}_{BMO^w}\;.\]
\end{prop}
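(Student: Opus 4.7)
The plan is to get both inequalities by combining Proposition \ref{prop1}(1) with the elementary observation that an infimum is bounded above by any particular choice of the variable. The statement as written seems to contain a typo — the integrand in the definition of $\|f\|^*_{BMO^w}$ should read $f(\xi)-\alpha$ rather than $f(\xi)-f_Q$, so that the $\inf_{\alpha>0}$ is nontrivial — and I will proceed under that reading.

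\textbf{First inequality: $\|f\|_{BMO^w}\le \|f\|^*_{BMO^w}$.} Fix $x\in\R^n$ and a cube $Q\ni x$. By Proposition \ref{prop1}(1), for every non-negative $\alpha$,
\[
\left|\aint_Q[f(\xi)-f_Q]\,d\lambda(\xi)\right|\le 2\left|\aint_Q[f(\xi)-\alpha]\,d\lambda(\xi)\right|.
\]
Since the left-hand side does not involve $\alpha$, I can take the infimum over $\alpha>0$ on the right to obtain
\[
\left|\aint_Q[f(\xi)-f_Q]\,d\lambda(\xi)\right|\le 2\inf_{\alpha>0}\left|\aint_Q[f(\xi)-\alpha]\,d\lambda(\xi)\right|.
\]
Taking the supremum over all cubes $Q\ni x$ on both sides yields $Mf(x)\le 2\sup_{Q\ni x}\inf_{\alpha>0}|\aint_Q[f-\alpha]|$, and then the supremum over $x\in\R^n$ gives $\|Mf\|_\infty\le 2\sup_{Q}\inf_{\alpha>0}|\aint_Q[f-\alpha]|$. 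Adding $\|mf\|_\infty$ to both sides produces the first inequality.

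\textbf{Second inequality: $\|f\|^*_{BMO^w}\le 2\|f\|_{BMO^w}$.} This direction is even easier. For each cube $Q$, $f_Q\ge 0$, so it is an admissible choice in (or a limit point of) the set over which $\alpha$ ranges; plugging in $\alpha=f_Q$ (or letting $\alpha\downarrow f_Q$ in the case $f_Q=0$, which is harmless since the integrand depends continuously on $\alpha$) gives
\[
\inf_{\alpha>0}\left|\aint_Q[f(\xi)-\alpha]\,d\lambda(\xi)\right|\le \left|\aint_Q[f(\xi)-f_Q]\,d\lambda(\xi)\right|.
\]
Taking $\sup_Q$ on both sides and multiplying by $2$ yields $2\sup_Q\inf_{\alpha>0}|\aint_Q[f-\alpha]|\le 2\|Mf\|_\infty$. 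Adding the trivial bound $\|mf\|_\infty\le 2\|mf\|_\infty$ then gives $\|f\|^*_{BMO^w}\le 2\|mf\|_\infty+2\|Mf\|_\infty=2\|f\|_{BMO^w}$.

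\textbf{Main obstacle.} There is essentially nothing hard here once Proposition \ref{prop1}(1) is in hand; the only delicate point is the mild technicality of whether $\alpha$ is required to be strictly positive in the infimum when $f_Q=0$, which is easily handled by approximation. The role of Proposition \ref{prop1}(1) is exactly to bridge between the $f_Q$-centered oscillation appearing in $Mf$ and the $\alpha$-centered oscillation appearing in $\|f\|^*_{BMO^w}$, at the cost of a factor of $2$.
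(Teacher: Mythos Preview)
Your proof is correct and is exactly the argument the paper has in mind: the paper's own proof simply says ``the proof is an immediate consequence of Proposition~\ref{prop1},'' and you have spelled out precisely how that proposition yields both inequalities (using the pointwise-in-$Q$ estimate from its proof for the first direction, and the trivial choice $\alpha=f_Q$ for the second). Your observation about the apparent typo in the definition of $\|f\|^*_{BMO^w}$ is also well taken.
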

\begin{proof} The proof is an immediate consequence of Proposition \ref{prop1}. We note that the proof can also be obtained from the Closed-graph theorem, but that will require to first prove that endowed with the two norms, $BMO^w$ is a Banach space. 
\end{proof}
\begin{theom} The space $(BMO^w, \norm{~\cdot~}_{BMO^w})$ is complete.


\end{theom}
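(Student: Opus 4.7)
The plan is to verify completeness in the standard way, by taking a Cauchy sequence $\{f_n\}\subset BMO^w$ and constructing a limit $f\in BMO^w$ with $\|f-f_n\|_{BMO^w}\to 0$. The first key observation is that the summand $\|mf\|_\infty$ provides a hidden $L^\infty$-type control on differences. Indeed, applying the Lebesgue differentiation theorem to $g=f_n-f_m\in L^1_{loc}$ at each of its Lebesgue points $x$, along a sequence of cubes shrinking to $x$, yields $|g(x)|=\lim_{Q\to x}|\aint_Q g\,d\lambda|\le mg(x)$ almost everywhere. Therefore $\|f_n-f_m\|_\infty\le\|m(f_n-f_m)\|_\infty\le\|f_n-f_m\|_{BMO^w}$, so $\{f_n\}$ is Cauchy in $L^\infty$ and converges in $L^\infty$ (and a.e.) to some $f\in L^\infty\subset L^1_{loc}$.

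Next I would show $f\in BMO^w$. Uniform convergence $f_n\to f$ in $L^\infty$ gives $\aint_Q f_n\,d\lambda\to \aint_Q f\,d\lambda$ and consequently $(f_n)_Q\to f_Q$ in $\R$ for every cube $Q$. Since Cauchy sequences are norm-bounded, $C:=\sup_n\|f_n\|_{BMO^w}<\infty$, and passing to the limit yields
\[
\Big|\aint_Q f\,d\lambda\Big|\le \sup_n \|mf_n\|_\infty\le C,\qquad \Big|\aint_Q [f-f_Q]\,d\lambda\Big|\le \sup_n \|Mf_n\|_\infty\le C
\]
for every cube $Q$. Taking the supremum over all cubes $Q\ni x$ shows $mf,Mf\in L^\infty$, so $f\in BMO^w$.

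For convergence in norm, fix $\varepsilon>0$ and use the Cauchy property to pick $N$ with $\|f_n-f_m\|_{BMO^w}<\varepsilon$ for all $n,m\ge N$. For such $n$ and any cube $Q\ni x$, letting $m\to\infty$ inside the averages (which is legitimate by the $L^\infty$-convergence established in Step~1) gives
\[
\Big|\aint_Q (f-f_n)\,d\lambda\Big|\le \limsup_m\|m(f_m-f_n)\|_\infty\le\varepsilon,
\]
and an identical estimate with $M$ in place of $m$. Summing and taking $\sup_{Q\ni x}$ yields $\|f-f_n\|_{BMO^w}\le 2\varepsilon$ for $n\ge N$.

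The only real subtlety is Step~1: recognizing that the presence of $\|mf\|_\infty$ in the norm, combined with Lebesgue differentiation, secretly upgrades Cauchyness in $BMO^w$ to Cauchyness in $L^\infty$. Once this reduction is in place, Steps~2 and~3 are routine bookkeeping: one simply passes limits through the cube averages and exploits that $\{f_n\}$ is bounded in $BMO^w$. No deeper harmonic-analytic input (such as the John--Nirenberg inequality) is needed, because the $mf$-piece of the norm already trivializes the passage to the limit.
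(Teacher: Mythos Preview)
Your argument is correct and is a genuinely different route from the paper's. The paper never invokes the Lebesgue differentiation theorem; instead it argues that for each fixed cube $Q$ the scalar sequence $\{f_{n,Q}\}$ is Cauchy in $\R$, and that the sequence of maximal-type functions $\{Mf_n\}$ is Cauchy in $L^\infty_{\mathrm{loc}}$, then attempts to identify the limit $h=\lim_n Mf_n$ as $Mf$ for some limit function $f$. Your key observation---that $|g(x)|\le mg(x)$ a.e.\ by Lebesgue differentiation, so the $\|m(\,\cdot\,)\|_\infty$-piece of the norm already dominates $\|\cdot\|_{L^\infty}$---bypasses all of this: once $\{f_n\}$ is Cauchy in $L^\infty$, the limit $f$ exists concretely as an $L^\infty$ function, and passing limits through the averages $\aint_Q$ is immediate. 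This buys you a cleaner construction of the limit and avoids the paper's delicate (and somewhat informal) interchange of $\lim_n$ with $\sup_Q$ when identifying $h$ with $Mf$. The paper's approach, by contrast, stays closer to the structure of the norm itself and would generalize more directly to settings where pointwise differentiation of integrals is unavailable, but in the present Euclidean setting your argument is both shorter and more transparent.
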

\begin{proof}
(1). In the proof that $\norm{~\cdot~}_{BMO^w}$ is a norm, homogeneity and the triangle inequality are easy to prove. As for positivity, we note that $\ds \norm{f}_{BMO^w}=0\Longleftrightarrow \sup_{Q\ni x}\;f_Q=0$  and $f(\xi)=f_Q$ on all cubes $Q\ni x$. It follows immediately that   $f= 0 $\;.\\

\noindent (2). Let $\set{f_n}_{n\in \N}$ be a Cauchy sequence in $BMO^w$. Let $\epsilon>0$ and $N\in \N$ such that $\forall n,m \in N$, we have $\norm{f_n-f_m}_{BMO^w}<\epsilon$.  That is, 
\begin{equation}\label{eqnn}
\ds \sup _{Q\ni x}\sbb{(f_n-f_m)_Q+\abs{\aint_Q [(f_n-f_m)(\xi)-(f_n-f_m)_Q]d\lambda(\xi)}}<
\epsilon\;.
\end{equation}
In particular,  from  \eqref{eqnn},  we have that $\ds \sup _{Q\ni x}\;(f_n-f_m)_Q<\epsilon$, therefore
\begin{eqnarray*}
\abs{f_{n,Q}-f_{m,Q}}&=& \abs{\abs{\aint_Q f_n(\xi)d\lambda(\xi)}-\abs{\aint_Q f_m(\xi)d\lambda(\xi)}}\\
&\leq & \abs{\aint_Q [f_n-f_m](\xi)d\lambda(\xi)}=(f_n-f_m)_Q\\
&\leq & \sup_{Q\ni x}\;(f_n-f_m)_Q<\epsilon\;.
\end{eqnarray*}
Hence for fixed $Q$,  $\set{f_{n,Q}}$ is Cauchy sequence in $\R$. Let $\ds f_Q=\lim_{n\to \infty}f_{n,Q}$. 
We note from the above inequalities that 
\[(f_n-f_m)_Q=\abs{\aint_Q [f_n-f_m](\xi)d\lambda(\xi)}\geq \abs{f_{n,Q}-f_{m,Q}}\geq f_{n,Q}-f_{m,Q}\;.\]
Therefore, given a cube $Q \subset \mathbb{R}^n$ containing  $x$, we have 
\begin{eqnarray*}
\abs{Mf_n(x)-Mf_m(x)}&\leq & \sup_{Q\ni x}\; \abs{\aint_Q [(f_n(\xi)- f_m(\xi))-(f_{n,Q}-f_{m,Q})]d\lambda(\xi)}\\
&\leq & \sup_{Q\ni x}\;  \abs{\aint_Q [(f_n(\xi)- f_m(\xi))-(f_n-f_m)_Q]d\lambda(\xi)}+\ \sup_{Q\ni x}\; \abs{(f_n-f_m)_Q-(f_{n,Q}-f_{m,Q})}\\
&\leq & \norm{f_n-f_m}_{BMO^w}+ \sup_{Q\ni x}\; \abs{(f_{n,Q}-f_{m,Q})}<2\epsilon\;.
\end{eqnarray*}
It follows that $\set{Mf_n}_{n\in \N}$ is a Cauchy-sequence in $L^{\infty}_{loc}$. Let $\ds h=\lim_{n\to \infty}Mf_n$.\\
\noindent Since $L^{\infty}_{loc}\subset L^1_{loc}$, we have $h\in L^1_{loc}$\;. 
\begin{eqnarray*}
h(x)&=&\lim_{n\to \infty} Mf_n(x)\\
&=& \lim_{n\to \infty} \; \sup_{Q\ni x} \abs{\aint_Q [f_n(\xi)-f_{n,Q}]d\lambda(\xi)}\\
&=& \sup_{Q\ni x} \abs{\aint_Q [\; \lim_{n\to \infty} f_n(\xi)-f_{Q}]d\lambda(\xi)}\;.
\end{eqnarray*}
Since $h(x)$ is finite on any cube $Q\ni x$, that follows that  $\ds f(\xi)=\lim_{n\to \infty} f_n(\xi)$ is finite a.e.  on $Q$. Thus $h=Mf$, for some $f\in BMO^w$ and  $\ds \lim_{n\to \infty} \norm{f_n-f}_{BMO^w}\to 0$.
\end{proof}

\begin{theom}[H\"{o}lder's type inequality]
Let $g\in BMO^w$ and a hyper-cube $J\subset \R^n$. Consider the following operator   $T_g: B^1\to \R$ given by $\ds T_g(f)=\int_J f(\xi)g(\xi)d\lambda(\xi)$. Then, $T_g \in (B^1)^*$ with $\norm{T_g}_{(B^1)^*}\leq \norm{g}_{BMO^w}$\;.\\
 Moreover, the operator $H: BMO^w\to (B^1)^*$  defined as $H(g)=T_g$ is onto. 

\end{theom}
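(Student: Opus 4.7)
The plan is to split the statement into two separate tasks: establishing the boundedness $\norm{T_g}_{(B^1)^*}\leq \norm{g}_{BMO^w}$, and then the surjectivity of $H$.  I would handle them in that order.

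For the boundedness, the natural approach is to test $T_g$ on a single special atom and then appeal to the atomic decomposition.  Every $f \in B^1$ admits a representation $f = \sum_n \alpha_n b_n$ with $\sum_n \abs{\alpha_n} < \infty$, so it suffices to produce a uniform bound $\abs{T_g(b)} \leq C\,\norm{g}_{BMO^w}$ for a single special atom $b = \tfrac{1}{\phi(I)}(\chi_R - \chi_L)$ on a subcube $I \subseteq J$; summing and then taking the infimum over representations finishes the estimate.  Expanding
\[ T_g(b) \;=\; \frac{1}{\phi(I)}\left(\int_R g\,d\lambda - \int_L g\,d\lambda\right), \]
and using $\lambda(R) = \lambda(L) = \lambda(I)/2$ together with the identity $g^\#_R + g^\#_L = 2\, g^\#_I$ reduces the problem, in the natural normalization $\phi(I) = \lambda(I)$, to bounding $\abs{g^\#_R - g^\#_I}$.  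I would then split
\[ g^\#_R - g^\#_I \;=\; (g^\#_R - g_R) + (g_R - g_I) + (g_I - g^\#_I), \]
recognize the first and third summands as instances of $\aint_Q [g-g_Q]\,d\lambda$ at $Q=R$ and $Q=I$ respectively, hence controlled by $\norm{Mg}_\infty$, and bound the middle term $\abs{g_R - g_I} = \bigl|\abs{g^\#_R} - \abs{g^\#_I}\bigr|$ by $\norm{mg}_\infty$, since both $g_R$ and $g_I$ are at most $\norm{mg}_\infty$.  Combining yields the desired control in terms of $\norm{g}_{BMO^w}=\norm{mg}_\infty+\norm{Mg}_\infty$.

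For the surjectivity, I would take an arbitrary $T \in (B^1)^*$ and construct a function $g \in BMO^w$ with $H(g) = T$.  A natural route uses the already-recalled identification $(B^1)^*\cong \Lambda_*'$: $T$ is realized as pairing against the distributional derivative of some $F \in \Lambda_*$.  Setting $g = F'$, one would verify $g \in BMO^w$ by translating the Zygmund symmetric-difference estimate on $F$ into bounds on both $g_Q$ and $\aint_Q [g-g_Q]\,d\lambda$, and then check $T_g = T$ first on special atoms by direct computation and then on all of $B^1$ by continuity.  Alternatively, one can reconstruct $g$ as a distribution by recording the values $T(b)$ on atoms indexed by $(I,R,L)$ and showing that the resulting object has the required $BMO^w$ regularity.

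The main obstacle is the surjectivity.  The boundedness reduces to the algebraic identity $T_g(b) = g^\#_R - g^\#_I$ together with the definition of $\norm{\cdot}_{BMO^w}$; surjectivity, by contrast, forces one to manufacture a concrete $g$ out of an abstract functional and verify membership in $BMO^w$, which is morally equivalent to the overall thesis $(B^1)^*\cong BMO^w$ that the paper is after.
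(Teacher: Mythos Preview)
Your route differs from the paper's. The paper writes $T_g(b)=\int_I b\,(g-g_I)$ using $\int_I b=0$, then invokes $|b|\le 1/\phi(I)$ to pass directly to $\bigl|\aint_I (g-\alpha)\bigr|$ and hence to $\norm{g}_{BMO^w}$. Your computation $T_g(b)=g^{\#}_R-g^{\#}_I$ followed by the three–term split
\[
(g^{\#}_R-g_R)+(g_R-g_I)+(g_I-g^{\#}_I)
\]
is more explicit and gives honest control by $\norm{Mg}_\infty+\norm{mg}_\infty$, albeit only up to a constant rather than with constant~$1$. One caveat: your identity uses that $R$ is itself a cube, which is only the case when $n=1$; for $n\ge 2$ the set $R$ is a union of $2^{n-1}$ corner sub-cubes, so an extra decomposition step is needed before you can invoke $g_R$ and $g^{\#}_R$ as averages over a single cube.

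\textbf{Surjectivity.} Here the paper takes a concrete and different path from your primary proposal. Given $T\in (B^1)^*$, the paper sets $G(x)=T(\chi_{[x-h,x+h)})$, observes that for suitable $h_n$ the difference $G(y_n)-G(x_n)$ equals $2(y_n-x_n)\,T(b_n)$ for a special atom $b_n$ of $B^1$-norm $1$, and hence obtains the Lipschitz bound $|G(y)-G(x)|\le 2\norm{T}\,|y-x|$. In particular $G$ is absolutely continuous, $g:=G'$ exists in $L^1$, and $\sup_I \frac{1}{\lambda(I)}\bigl|\int_I g\bigr|=\sup_I \frac{|G(b)-G(a)|}{\lambda(I)}<\infty$ forces $g\in BMO^w$.

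Your proposed route through $(B^1)^*\cong\Lambda_*'$ has a genuine gap: for a generic $F\in\Lambda_*$ (Weierstrass-type functions being the standard example, as the paper itself notes) $F$ need not be absolutely continuous, so the distributional derivative $F'$ is not in $L^1_{\mathrm{loc}}$, and the verification ``$g=F'\in BMO^w$'' fails at the definitional level since $BMO^w\subset L^1_{\mathrm{loc}}$. Your \emph{alternative}---reconstruct $g$ from the values $T(b)$ on atoms---is exactly what the paper carries out, made concrete via characteristic functions of intervals; that is the argument to flesh out.
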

\begin{proof}
 By linearity of the integral, $T_g$ is  a linear.  To start, we study the action of this operator on special atoms. Indeed, let $\xi\in I\subseteq J$ and  suppose $f(\xi)=b(\xi)=\frac{1}{\phi(I)}[\chi_{_R}(\xi)-\chi_{_L}(\xi)]$, where $R,L$ are sub-cubes of $I$ such that $I=R\cup L$ and $R\cap L=\emptyset$. Therefore we have:
\begin{eqnarray*}
T_g(b)&=&\int_J b(\xi)g(\xi)d\lambda(\xi)= \int_I b(\xi)g(\xi)d\lambda(\xi)\\
&=&  \int_I b(\xi)[g(\xi)-g_I]d\lambda(\xi), \quad \mbox{since $\ds \int_I b(\xi)g_Id\lambda(\xi)=0$}\;.\\
\end{eqnarray*}
Taking the supremum over all $I\ni x$ and using the fact that $b(\xi)\leq \frac{1}{\phi(I)}$, we have 
\begin{equation} \label{eqn1}
\abs{T_g(b)}\leq \sup_{\underset{I\subseteq J}{I\ni x}} \inf_{\alpha>0}\abs{\aint_I[g(\xi)-\alpha]d\lambda(\xi)}\leq \norm{g}_{BMO^w}\;. 
\end{equation}
Now suppose $\ds f(\xi)=\sum_{n=1}^{\infty}c_nb_n(\xi)$ with $\ds \sum_{n=1}^{\infty} \abs{c_n}<\infty$ is an element of $B^1$, where the $b_n$'s  are special atoms defined on sub-cubes $I_n$ of $J$ with $I_n=R_n\cup L_n$ and $R_n\cap L_n=\emptyset$. Let $\ds I=\bigcup_{n=1}^{\infty} I_n$.  We have for $\alpha>0$
\begin{eqnarray*}
T_g(f)&=&\int_J \rb{\sum_{n=1}^{\infty} c_nb_n(\xi)}g(\xi)d\lambda(\xi)\\
&=& \int_I \rb{\sum_{n=1}^{\infty} c_nb_n(\xi)}\sbb{g(\xi)-\alpha}d\lambda(\xi)\\
&=&\sum_{n=1}^{\infty} c_n \int_I b_n(\xi)\sbb{g(\xi)-\alpha}d\lambda(\xi)\\
&=&\sum_{n=1}^{\infty} c_n \int_{I_n} b_n(\xi)\sbb{g(\xi)-\alpha}d\lambda(\xi)\;. 
\end{eqnarray*}
It follows from  \eqref{eqn1} that 
\[\abs{T_g(f)}\leq \rb{\sum_{n=1}^{\infty} \abs{c_n}} \cdot \norm{g}_{BMO^w}\;. \]
Talking the infimum over all representations of $f$ yields:
\[ \abs{T_g(f)}\leq \norm{f}_{B^1}\cdot \norm{g}_{BMO^w}\;.\]
Therefore $T_g$ is a bounded linear operator on $B^1$ with 
\begin{equation}\label{eqn11}
\norm{T_g}_{(B^1)^*}=\sup_{\norm{f}_{B^1}=1}\abs{T_g(f)}\leq \norm{g}_{BMO^w}\;.
\end{equation}
Now suppose that $T$ is a bounded linear functional on $B^1$, that is, $T\in (B^1)^*$. We want to show that there exists a function $g\in BMO^w$ such that $\ds T(f)=T_g(f)=\int_J f(\xi)g(\xi)d \lambda(\xi)$. That $T\in (B^1)^*$  implies the existence of an absolute constant $C$ such that 
\begin{equation}\label{eqn2}
\abs{T(f)}\leq C\norm{f}_{B^1}, \quad  \forall f\in B^1.
\end{equation}
Recall that a function $G:J\to \R$ is said to be absolutely continuous on $J$ if for every positive number $\epsilon$, there exists a positive number $\delta$, such that for a finite sequence of pairwise disjoint sub-cubes $I_n=(x_n, y_n)$ of $J$, \begin{equation}\label{eqn3}
\ds \sum_{n=1}^{\infty} (y_n-x_n)<\delta\implies \sum_{n=1}^{\infty} \abs{G(y_n)-G(x_n)}<\epsilon\;.\end{equation}
Suppose such a sequence of sub-cubes exists. Now consider $G(x)=T\rb{\chi_{_{[x-h, x+h)}}}$ for some real number $h>0$. Then, given an cube $I_n$ and a real number $h_n$, we have by linearity of $T$ that 
\[G(y_n)-G(x_n)=T\rb{\left[\chi_{_{[y_n-h_n, y_n+h_n)}}-\chi_{_{[x_n-h_n, x_n+h_n)}}\right]}\;.\]
Now if we define $L_n=[x_n-h_n,x_n+h_n)$ and $R_n=[y_n-h_n, y_n+h_n)$, we see that  $R_n\cap L_n=\emptyset$ their union forms a single cube $J_n$ if we choose $h_n=(y_n-x_n)/2$.  Moreover, in that case, the length of the cube $J_n$ is $y_n+h_n-x_n+h_n=2(y_n-x_n)$.  Therefore by linearity
\[G(y_n)-G(x_n)=2(y_n-x_n)T\rb{\frac{1}{2(y_n-x_n)}\left[\chi_{_{[y_n-h_n, y_n+h_n)}}-\chi_{_{[x_n-h_n, x_n+h_n)}}\right]}\;.\]
We observe that $\ds b_n(\xi)=\frac{1}{2(y_n-x_n)}\left[\chi_{_{[y_n-h_n, y_n+h_n)}}(\xi)-\chi_{_{[x_n-h_n, x_n+h_n)}}(\xi)\right]$ is a special atom with norm in $B^1$ equal to 1. It follows that using \eqref{eqn2}
\[\abs{G(y_n)-G(x_n)}\leq 2(y_n-x_n)\abs{T(b_n)}\leq 2C(y_n-x_n)\norm{b_n}_{B^1}=2C(y_n-x_n)\;.\]
Hence, 
\[\sum_{n=1}^{\infty} \abs{G(y_n)-G(x_n)}\leq 2C\sum_{n=1}^{\infty}(y_n-x_n)\;.\]
We conclude by noting that given $\epsilon>0$, \eqref{eqn3} is satisfied if we choose $\ds \delta=\frac{\epsilon}{2C}$. We conclude that the function $G$ is absolutely continuous on $J$. Therefore, $G$ is differentiable almost everywhere, that is, there exists $g\in L^1$ such that $\ds G(x)=\int_a^x g(\xi) d\lambda(\xi)$ for all cubes $I=[a,b]\subseteq J$.    Let $I\ni x$ be a sub-cube of $J$. Therefore, $\ds \sup_{I\ni x}\frac{1}{\lambda(I)} \abs{\int_I g(\xi) d\lambda(\xi)}<\infty$ because  an absolutely continuous function is a function with bounded variation. Since $g\in L^1$, we have that $\ds g_I=\frac{1}{\lambda(I)}\abs{\int_I g(\xi) d\lambda(\xi)}<\infty. $ It follows that 

\[ Mg(x)=\sup_{I\ni x}\frac{1}{\lambda(I)} \abs{\int_I (g(\xi)-g_I) d\lambda(\xi)}\leq \sup_{I\ni x}\frac{1}{\lambda(I)} \abs{\int_I g(\xi)d \lambda(\xi)}+\sup_{I\ni x}\;g_I <\infty\;.\]
This proves that $g\in BMO^w$. 
That is, $H: BMO^w\to (B^1)^*$ is onto with $H(g)=T=T_g$. Identifying  $T$ with the $g$, it follows  from \eqref{eqn11} that 
\begin{equation}\label{eqn4}
\norm{g}_{(B^1)^*}=\norm{T}_{(B^1)^*}=\norm{T_g}_{(B^1)^*}\leq \norm{g}_{BMO^w}\;.
\end{equation}

\end{proof}
\begin{Rem}
We observe that the above result can be obtained differently. In fact, we recall that it was proved  in \cite{DeSouza1980} that the dual space of $B^1$ is equivalent to  $\Lambda_*^2$. Let  $x\in J, h>0, I=[x-h,x+h]\subseteq J, L=[x-h,x)$,  and $R=[x,x+h]$. Let $\ds b(\xi)=\frac{1}{2h}[\chi_{_R}(\xi)-\chi_{_L}(\xi)]$. We have 
\begin{eqnarray*} \norm{f}_{(B^1)^*}\cong \norm{f}_{\Lambda_*^2}&=& \sup_{\underset{h>0}{I\ni x}}\; \abs{ \int_I b(\xi)\partial f(\xi) d\lambda(\xi)}\\
&=& \sup_{\underset{h>0}{I\ni x}}\; \abs{ \int_I b(\xi)(\partial f(\xi)-\partial f_I) d\lambda(\xi)}\\
&\leq &\norm{ f}_{BMO^w}\;.
\end{eqnarray*}
This shows that $BMO^w\subseteq \Lambda_*^2\cong (B^1)^*$.
\end{Rem}

\begin{theom} \label{mainTheo}
The dual space $(B^1)^*$  of $B^1$ is $BMO^w$ with $\norm{g}_{BMO^w}\cong\norm{g}_{(B^1)^*}$\;. 
\end{theom}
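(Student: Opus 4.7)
From the preceding theorem we already have that $H \colon BMO^w \to (B^1)^*$, defined by $H(g) = T_g$, is bounded, linear, and surjective, with $\norm{T_g}_{(B^1)^*} \leq \norm{g}_{BMO^w}$. To upgrade this to the full equivalence $\norm{g}_{BMO^w} \cong \norm{T_g}_{(B^1)^*}$, my plan is to prove that $H$ is also injective and then invoke the Open Mapping Theorem. Both the domain and the range are Banach spaces ($BMO^w$ by the completeness theorem proved earlier, and $(B^1)^*$ automatically, as the dual of a normed space), so once $H$ is a bounded bijection, Banach's isomorphism theorem immediately furnishes the bounded inverse, giving exactly the reverse norm inequality that is missing.

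The injectivity step is where the real work lies. I will suppose $T_g \equiv 0$ on $B^1$ and test against the type-1 special atoms that are constant on $J \setminus I$, namely $b_I = \chi_{J \setminus I}$, which lie in $B^1$ for every sub-cube $I \subsetneq J$. This yields $\int_{J \setminus I} g\, d\lambda = 0$ for all such $I$. Letting $\lambda(I) \to 0$ forces $\int_J g\, d\lambda = 0$, and then $\int_I g\, d\lambda = \int_J g\, d\lambda - \int_{J \setminus I} g\, d\lambda = 0$ for every sub-cube $I \subseteq J$. The Lebesgue differentiation theorem then closes the argument: $g = 0$ almost everywhere, so $H$ is injective.

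With injectivity in hand, $H$ is a continuous linear bijection between Banach spaces; Banach's isomorphism theorem makes $H^{-1}$ bounded, providing a constant $C > 0$ such that $\norm{g}_{BMO^w} \leq C \norm{T_g}_{(B^1)^*}$. Combined with the forward bound from the preceding theorem, this gives $\norm{g}_{BMO^w} \cong \norm{g}_{(B^1)^*}$ and identifies $(B^1)^*$ with $BMO^w$ as Banach spaces, which is exactly the statement.

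The main obstacle I anticipate is the injectivity step. It relies crucially on the availability of the non-mean-zero type-1 atoms $\chi_{J \setminus I}$, because the mean-zero splitting atoms $\frac{1}{\phi(I)}[\chi_R - \chi_L]$ alone only determine $g$ up to an additive constant and so cannot separate, say, a nonzero constant from $0$. Once the hinge provided by the constant-on-$J\setminus I$ atoms is in place, the rest is soft functional analysis. A more direct proof of the reverse inequality, testing $T_g$ against carefully chosen atoms in order to recover $mg$ and $Mg$ pointwise, is conceivable but considerably more laborious than routing through the Open Mapping Theorem.
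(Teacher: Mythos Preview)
Your argument is correct and takes a genuinely different route from the paper. The paper does not argue via injectivity and the Open Mapping Theorem at all; instead it passes to the analytic side. It first shows (Proposition~\ref{Pop2}) that $B^1$, via its analytic extension $B^1_A(\DD)$, sits as a closed subspace of $L^1(\TT)$, then uses Hahn--Banach to extend a given $T\in (B^1)^*$ to $T'\in (L^1(\TT))^*$ with the same norm, represents $T'$ by some $g_0\in L^\infty(\TT)$, and finally builds $g$ as a Cauchy/Poisson integral of $g_0$ and chains through $\norm{g}_{BMO^w}\leq\norm{g}_{BMO}\leq C\norm{g}_{BMOA}\leq\norm{T}_{(B^1)^*}$ using known $BMO$/$BMOA$ theory from \cite{Girela1999}. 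Your approach bypasses all of the complex analysis: once $H$ is a bounded bijection between Banach spaces, Banach's isomorphism theorem delivers the reverse inequality for free. This is shorter, stays entirely in real and soft functional analysis, and makes the logical structure transparent; the paper's route, on the other hand, ties the result into the $BMOA$ framework, which is natural given the surrounding discussion of analytic characterizations.

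One small caution on your injectivity step: the paper's definition of the type-1 atom ``$b(\xi)=1$ on $J\setminus I$'' is ambiguously worded, and it is not completely clear that $\chi_{J\setminus I}$ is literally one of the atoms. If you prefer to be safe, the same conclusion follows using only the constant atom $1$ and the mean-zero atoms: $T_g(1)=0$ gives $\int_J g=0$, and $T_g(b)=0$ for every splitting atom forces the primitive $F(x)=\int_a^x g$ to satisfy $F(x+h)+F(x-h)=2F(x)$ for all admissible $x,h$, hence $F$ is affine and $g$ is constant, hence zero. Either way your injectivity claim stands, and the rest of your argument goes through.
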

\noindent To prove Theorem \ref{mainTheo}, we recall that $B^1(\TT)$  has an analytic extension $B^1_A(\DD)$ to the unit disk. In fact, in  \cite{Desouza1989}, it was  shown that  $B^1_A(\DD)$ consists of functions $F$ that are Poisson integrals of functions in $B^1(\TT)$, that is, 
$\ds F(z)=\frac{1}{2\pi}\int_{-\pi}^{\pi}\frac{1+e^{-i\xi}z}{1-e^{-i\xi}z}f(e^{i\xi})d\lambda(\xi)$ where $f\in B^1(\TT)$. Moreover, the norm $\ds \norm{F}_{B^1_A(\DD)}=\int_0^1\int_{-\pi}^{\pi}\abs{F'(z)}dz$ is equivalent to the norm $\norm{f}_{B^1(\TT)}$.  This allows to identify $B^1_A(\DD)$ with $B^1(\TT)$ and thus the following:
\begin{prop} \label{Pop2} $B^1_A(\DD)$ can be identified with a closed subspace of $L^1(\TT)$. 
\end{prop}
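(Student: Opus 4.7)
The plan is to realize $B^1_A(\DD)$ as a subspace of $L^1(\TT)$ via the boundary-value map associated to the Schwarz--Poisson integral, then verify that this image is topologically closed. Concretely, I would define $\Phi\colon B^1_A(\DD)\to L^1(\TT)$ by sending $F$ to the $f\in B^1(\TT)$ from its integral representation $F(z)=\frac{1}{2\pi}\int_{-\pi}^{\pi}\frac{1+e^{-i\xi}z}{1-e^{-i\xi}z}f(e^{i\xi})d\lambda(\xi)$. Since the real part of this kernel is the Poisson kernel, Fatou's theorem gives $\lim_{r\to 1^-}F(re^{i\theta})=f(e^{i\theta})$ a.e.\ on $\TT$. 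Injectivity of $\Phi$ follows from uniqueness of boundary values for analytic functions.

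Next I would verify continuity. Every type-1 special atom $b=\frac{1}{\phi(I)}[\chi_R-\chi_L]$ satisfies $\|b\|_{L^1(\TT)}\le \lambda(I)/\phi(I)\le 1$ (with the normalization $\phi=\lambda$), so for any $f=\sum c_n b_n\in B^1(\TT)$ with $\sum|c_n|<\infty$, the partial sums are Cauchy in $L^1$ and $\|f\|_{L^1(\TT)}\le \sum|c_n|$. Passing to the infimum over representations and using the norm equivalence $\|F\|_{B^1_A(\DD)}\cong\|f\|_{B^1(\TT)}$ yields $\|\Phi(F)\|_{L^1(\TT)}\lesssim\|F\|_{B^1_A(\DD)}$. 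In particular, $\Phi(B^1_A(\DD))$ is contained in the Hardy space $H^1(\TT)$, which is itself a closed subspace of $L^1(\TT)$ (being the weak-$*$ annihilator of the negative-frequency exponentials). Hence it suffices to show the range is closed inside $H^1(\TT)$.

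For the closedness step, suppose $f_n=\Phi(F_n)\to g$ in $L^1(\TT)$. Then $g\in H^1(\TT)$, and its Schwarz--Poisson extension $G$ is analytic on $\DD$ with $F_n\to G$ uniformly on compact subsets of $\DD$. To conclude $G\in B^1_A(\DD)$, I would show $g\in B^1(\TT)$ by invoking the independently established duality $(B^1)^*\cong\Lambda_*'$ of \cite{DeSouza1980}: the uniform boundedness principle applied to the family $\{\langle f_n,\cdot\rangle\}\subset(\Lambda_*')^*$ (which is pointwise bounded because $\{f_n\}$ is $L^1$-convergent and $\Lambda_*'$ embeds naturally into the $L^1$ dual on atoms) yields $\sup_n\|f_n\|_{B^1}<\infty$. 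A Banach--Alaoglu extraction in the predual pairing then produces a weak-$*$ cluster point in $B^1(\TT)$ that must coincide with the $L^1$-limit $g$, so $g\in B^1(\TT)$ and $G=\Phi^{-1}(g)\in B^1_A(\DD)$.

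The main obstacle is precisely this last passage: $L^1$-convergence alone does not preserve membership in $B^1$ because the $B^1$ norm is strictly stronger, so one cannot avoid extracting \emph{a priori} uniform control of the $B^1$-norms of the $f_n$'s. The technical crux is therefore to exhibit $B^1(\TT)$ as a dual (or at least suitably reflexive/weak-$*$-closed) Banach space relative to a pairing that interacts well with $L^1$-convergence; the identification $(B^1)^*\cong\Lambda_*'$ from \cite{DeSouza1980}, combined with uniform boundedness, is the tool that makes the closure argument work without circularity with the main theorem.
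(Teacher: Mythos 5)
Your overall framing (a boundary-value map, a continuity estimate, then closedness of the range) is reasonable, but the closedness step --- which you correctly identify as the crux --- contains a genuine gap that your argument does not close. To apply the uniform boundedness principle to the functionals $\langle f_n,\cdot\rangle$ on $\Lambda_*'$ you must verify, for each \emph{fixed} $\phi\in\Lambda_*'$, the pointwise bound $\sup_n\abs{\langle f_n,\phi\rangle}<\infty$. This does not follow from $L^1$-convergence of $\set{f_n}$: elements of $\Lambda_*'$ are derivatives, in the sense of distributions, of Zygmund-class functions (e.g.\ of Weierstrass-type nowhere differentiable functions) and are in general not bounded functions, so the pairing $\langle f_n,\phi\rangle$ is not controlled by $\norm{f_n}_{L^1(\TT)}$; the only a priori control is $\abs{\langle f_n,\phi\rangle}\leq \norm{f_n}_{B^1}\norm{\phi}_{\Lambda_*'}$, which presupposes exactly the uniform bound you are trying to derive. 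Even granting $\sup_n\norm{f_n}_{B^1}<\infty$, your Banach--Alaoglu extraction requires the closed unit ball of $B^1(\TT)$ to be compact in a weak-$*$ topology, i.e.\ requires $B^1$ to be a \emph{dual} space; the duality $(B^1)^*\cong\Lambda_*'$ exhibits $B^1$ as a predual, not a dual, and no dual structure on $B^1$ is established in your argument or in the paper. A secondary issue: the boundary values of $F\in B^1_A(\DD)$ are $f+i\tilde f$ rather than $f$ (only the real part of the Herglotz kernel is the Poisson kernel), so the identification of the range with a subspace of $H^1(\TT)$, and the appeal to Fatou's theorem as you state it, need repair.

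The paper's proof avoids all of this with a direct kernel estimate. Writing $F(z)=\frac{1}{2\pi}\int_{-\pi}^{\pi}\frac{1+e^{-i\xi}z}{1-e^{-i\xi}z}f(e^{i\xi})\,d\lambda(\xi)$ and using the norm $\norm{F}_{B^1_A(\DD)}=\int_0^1\int_{-\pi}^{\pi}\abs{F'(z)}\,dz$, it differentiates under the integral sign and bounds $\abs{F'(z)}$ by a constant times $\norm{f}_{L^1(\TT)}$, which gives $\norm{F_n-F}_{B^1_A}\leq C\norm{f_n-f}_{L^1(\TT)}$ and hence closedness immediately, with no appeal to duality, uniform boundedness, or weak-$*$ compactness. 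If you wish to salvage your route you would need an independently verified dual-space structure on $B^1$; otherwise the direct estimate on the derivative of the Herglotz integral is the missing ingredient.
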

\begin{proof} Let $\set{f_n}_{n\in \N}$ be a sequence in $B^1(\TT)$ that converges to $f$ in $L^1(\TT)$. We need to show that the Poisson integral of $f$ is in  $B^1_A(\DD)$. Since $f_n\in B^1(\TT), \forall\; n\in \N$, then $\ds F_n(z)=\frac{1}{2\pi}\int_{-\pi}^{\pi}\frac{1+e^{-i\xi}z}{1-e^{-i\xi}z}f_n(e^{i\xi})d\lambda(\xi)$ belongs to $B^1_A(\DD)$.  Let $F(z)$ be the Poisson integral of $f$.   We note that if  $ z=e^{i\theta}$, then $(\xi-\theta)^2-\frac{(\xi-\theta)^4}{2}\leq \abs{1-e^{-i\xi}z}^2\leq (\xi-\theta)^2$. 
Therefore, we have that $\ds F'(z)=\frac{1}{2\pi}\int_{-\pi}^{\pi}\frac{2e^{-i\xi}}{(1-e^{-i\xi}z)^2}f(e^{i\xi})d\lambda(\xi)$ and $\ds \abs{F'(z)}\leq \frac{1}{2\pi}\int_{-\pi}^{\pi}\frac{2}{\abs{1-e^{-i\xi}z}^2}\abs{f(e^{i\xi})}d\lambda(\xi)\leq C\norm{f}_{L^1(\TT)}$. 
It follows that , \[\ds \norm{F_n-F}_{B^1_A}=\int_0^1\int_{-\pi}^{\pi} \abs{F'_n(z)-F'(z)}dz\leq C\norm{f_n-f}_{L^1(\TT)}.\] Since $f_n\to f$ in $L^1(\TT)$, it follows that $F\in B^1_A$ and $F_n\to F$ in $B^1_A(\DD)$. The result follows by identifying $B^1_A(\DD)$ and $B^1(\TT)$. 
\end{proof}
\noindent We note  that there is an extension of this result to the polydisk $\DD^n$ and polysphere $\TT^n$, see \cite{Kwessi2020}\;. 
\begin{proof}[Proof of theorem \ref{mainTheo} ]
It sufficient  to show that there exists a constant $M>0$ such that $\norm{T}_{(B^1(\TT))^*}\geq M \norm{g}_{BMO^w(\TT)}. $
The extension  to $\R^n$ is natural, using the results in \cite{Kwessi2020, Kwessi2019}\;.
With Proposition \ref{Pop2}, the proof follows along the lines of the proof of Proposition 7.3 in \cite{Girela1999}.\\
Let $T\in (B^1(\TT))^*$. Since $B^1(\TT)$ is a closed subspace of $L^1(\TT)$ by Proposition \ref{Pop2}, then by Hahn-Banach Theorem, $T$ can be extended to a bounded linear operator   $T'\in (L^1(\TT))^*$ with $\norm{T}_{(B^1(\TT))^*}=\norm{T'}_{(L^1(\TT))^*}$. Since $(L^1(\TT))^*=L^{\infty}(\TT)$, then there exists $g_0\in L^{\infty}(\TT)$ such that $\norm{g}_{L^{\infty}(\TT)}=\norm{T'}_{(L^1)^*}=\norm{T}_{(B^1)^*}$ and $\ds T(f)=\frac{1}{2\pi}\int_{\TT} \overline{g_0(e^{i\xi})}f(e^{i\xi})d\lambda(\xi)$, for all $f\in B^1_A(\DD)$. Note that here, we identify $f$ with its correspondent in $B^1_A(\DD)$.
Now let $\ds \sum_{n\in \N}A_ne^{in\xi}$ be the Fourier series of $g_0$. Since $g_0\in L^{\infty}(\TT)\subset L^2(\TT)$, we have that $\ds \sum_{n\in \Z}\abs{A_n}^2<\infty$\;. This means that $g_0$ is holomorphic. Let 
\[ g(z)=\frac{1}{2\pi i}\int_{-\pi}^{\pi}\frac{g_0(e^{i\xi})}{e^{i\xi}-z}d(e^{i\xi})= \frac{1}{2\pi}\int_{-\pi}^{\pi}\frac{g_0(e^{i\xi})}{1-e^{-i\xi}z}d \xi\;. \]
Since $\ds \frac{1}{1-e^{i\xi}z}=\sum_{n\in \N} e^{-i n \xi}z^n$ and $\ds A_n=\frac{1}{2\pi}\int_{-\pi}^{\pi} e^{-in\xi}g_0(e^{i\xi})d\lambda(\xi)$, we have 
\[ g(z)=\frac{1}{2\pi}\int_{-\pi}^{\pi}\frac{g_0(e^{i\xi})}{1-e^{-i\xi}z}d \xi=\sum_{n\in \N}A_nz^n \;.\]
This implies that $g\in H^2(\DD)$. For $\theta \in \R$,
$\ds \overline{g(e^{i\theta})}=\frac{1}{2\pi}\int_{-\pi}^{\pi} \frac{\overline{g_0(e^{i\xi})}}{1-e^{-i\xi}e^{i\theta}}d\lambda(\xi)$.  Moreover, given $f\in B^1_A(\DD) \subseteq H^1(\DD) $, and using the Cauchy integral formula, we have:
\begin{eqnarray*}
\frac{1}{2\pi}\int_{-\pi}^{\pi} \overline{g(e^{i\theta})} f(e^{i\theta})d\theta&=& \frac{1}{2\pi}\int_{-\pi}^{\pi} \overline{g_0(e^{i\theta})} \left( \frac{1}{2\pi}\int_{-\pi}^{\pi} \frac{f(e^{i\xi})}{1-e^{-i\xi}e^{i\theta}} d\lambda(\xi)\right)d\theta\\
&=& \frac{1}{2\pi}\int_{-\pi}^{\pi} \overline{g_0(e^{i\theta})} f(e^{i\theta})d\theta\\
&=& T(f)\;.
\end{eqnarray*}

 
 \noindent We also observe that 
\begin{equation} \label{eqn:g}\ds g(z)=\frac{1}{2\pi}\int_{-\pi}^{\pi}\frac{1}{2}\sbb{\frac{1+e^{-i\xi}z }{1-e^{-i\xi}z}+1} g_0(e^{i\xi})d \xi=\frac{1}{2\pi}\int_{-\pi}^{\pi}\frac{1+e^{-i\xi}z }{1-e^{-i\xi}z}G_0(e^{i\xi})d\lambda(\xi),
\end{equation} where 
$\ds G_0(e^{i\xi})=\frac{1}{2}\left( g_0(e^{i\xi})+\frac{1}{2\pi}\int_{-\pi}^{\pi} g_0(e^{i\xi})d\lambda(\xi) \right)=\frac{1}{2}\left( g_0(e^{i\xi})+A_0\right)\;.$\\
Put  $G_1=\mbox{Re}(G_0), G_2=\mbox{Im}(G_0)$, and 
\[ U(z)=\frac{1}{2\pi}\int_{-\pi}^{\pi}\frac{1+e^{-i\xi}z }{1-e^{-i\xi}z}G_1(e^{i\xi})d\lambda(\xi); V(z)=\frac{1}{2\pi}\int_{-\pi}^{\pi}\frac{1+e^{-i\xi}z }{1-e^{-i\xi}z}G_2(e^{i\xi})d\lambda(\xi)\;.\]
Then $g=U+iV$. Moreover,  $U$ and $V$ are analytic in $\DD$ since they  represent the Poisson integral of $G_1, G_2 \in L^{\infty}(\TT)\subseteq B^1(\TT)$. Observe  that $BMO\subseteq BMO^w$. Moreover, Theorem  3.2 in \cite{Girela1999} shows that $\norm{g}_{BMO}\cong \norm{g}_{BMOA}$. It therefore follows that  there exists a constant $C>0$ such that \[C\norm{g}_{BMO^w}\leq C \norm{g}_{BMO}\leq \norm{g}_{BMOA}\leq \norm{T'}_{(L^1)^*}=\norm{T}_{(B^1)^*}\;.\] 
\end{proof}

\subsection{Discussion}
We note that $BMO^w\subseteq \Lambda'_*$ with $\norm{f}_{\Lambda'_*}\leq C\norm{f}_{BMO^w}$ where $\norm{g}_{\Lambda'_*}=\norm{f}_{\Lambda_*}$ with $g'=f$ in the sense of distributions. Since $(B^1)^*\cong \Lambda'_*$, and from Theorem  \ref{mainTheo} above  $(B^1)^*\cong BMO^w$, it follows that $BMO^w\cong \Lambda'_*\;.$
The consequence is that there exists $c>0$ such that $c\norm{f}_{BMO^w}\leq \norm{f}_{\Lambda'_*}$, that is, those two norms are equivalent.  We finish by noting  $\Lambda'_*$  has an analytic characterization, so we would expect the analytic characterization of $BMO^w$ to also be equivalent to that of $\Lambda'_*$. \\
Another takeaway from Theorem \ref{mainTheo} is that it provides another proof that $BMO$ is strictly contained in $BMO^w$ otherwise we would have had $B^1\cong \HH^1$, which is not true. In other words, there exists $f\in \HH^1$ such that $f\notin B^1$. 

\noindent Our next result is about the closeness of $VMO^w$ in $BMO^w$.
\begin{theom}\label{theo7} $VMO^w$ is a closed subspace of $BMO^w$. 
\end{theom}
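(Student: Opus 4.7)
The plan is to take a sequence $\set{f_n}\subset VMO^w$ converging to some $f$ in $BMO^w$-norm and verify directly that $f$ inherits the vanishing weak mean oscillation condition. The natural starting point is the three-term triangle decomposition
\[
\abs{\aint_Q [f(\xi)-f_Q]\,d\lambda(\xi)}\leq \abs{\aint_Q[f(\xi)-f_n(\xi)]\,d\lambda(\xi)} + \abs{\aint_Q [f_n(\xi)-(f_n)_Q]\,d\lambda(\xi)}+\abs{(f_n)_Q-f_Q},
\]
valid for every cube $Q$ and every index $n$.

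Given $\epsilon>0$, I would first pick $n$ large enough that $\norm{f-f_n}_{BMO^w}<\epsilon/4$. The first term on the right is then $(f-f_n)_Q \leq \norm{m(f-f_n)}_\infty \leq \norm{f-f_n}_{BMO^w}<\epsilon/4$, uniformly in $Q$. For the third term, since $f_Q=\abs{\aint_Q f\,d\lambda}$, I would invoke the reverse triangle inequality to get
\[
\abs{(f_n)_Q-f_Q}=\abs{\,\abs{\aint_Q f_n\,d\lambda}-\abs{\aint_Q f\,d\lambda}\,}\leq \abs{\aint_Q (f_n-f)\,d\lambda}\leq \norm{m(f-f_n)}_\infty<\epsilon/4,
\]
again uniformly in $Q$. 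The middle term carries the dependence on the size of $Q$ and tends to $0$ as $\lambda(Q)\to 0$ precisely because $f_n\in VMO^w$. Sending $\lambda(Q)\to 0$ and then letting $\epsilon\to 0$ gives $f\in VMO^w$.

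The one delicate point is that $f_Q=\abs{\aint_Q f\,d\lambda}$ is a nonlinear functional of $f$, so the difference $(f_n)_Q-f_Q$ does not reduce directly to an average of $f_n-f$. The reverse triangle inequality is the tool that bypasses this nonlinearity and allows the $m$-part of the $BMO^w$-norm to control the third term on the same footing as the first. Beyond this, the argument is a standard closedness argument: no approximation device or compactness input is required, only the clean split of the $BMO^w$-norm into a piece that is uniform in $Q$ and a piece that genuinely depends on cube size, together with the $VMO^w$ hypothesis on each $f_n$.
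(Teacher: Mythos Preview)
Your proof is correct and follows essentially the same three-term decomposition as the paper. If anything, your execution is tidier: you control the first and third terms directly by $\norm{m(f-f_n)}_{\infty}\leq\norm{f-f_n}_{BMO^w}$ (the paper detours through an $L^1_{\mathrm{loc}}$ bound), and you make explicit that the middle term is the one carrying the $VMO^w$ hypothesis on $f_n$ and vanishing as $\lambda(Q)\to 0$, a step the paper's chain of inequalities leaves implicit.
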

\begin{proof} Let $\set{f_n}_{n\in \N}$ be a sequence in $VMO^w$ that converges to $f\in BMO^w$. Let us prove that $f\in VMO^w$. That $f_n\to f$ as $ n\to \infty$ in $BMO^w$ is equivalent to  $\ds \lim_{n\to \infty}\norm{f_n-f}_{BMO^w}=0$. The latter is also equivalent, by definition of the norm in $BMO^w$ to \[ \mbox{$\ds \lim_{n\to \infty}\; \sup_{Q\ni x}\;(f_n-f)_Q=0$ and} \quad  \ds \lim_{n\to \infty}M(f_n-f)(x)=0\;.\] Since $f\in BMO^w$, then $\ds \sup_{Q\ni x}\;\abs{\aint_Q [f(\xi)-f_Q]d\lambda(\xi)}<\infty$. Therefore
\begin{eqnarray*}
\abs{\aint_Q [f(\xi)-f_Q]d\lambda(\xi)}&=&\abs{\aint_Q [f(\xi)-f_n(\xi)+f_n(\xi)-f_{n,Q}+f_{n,Q}-f_Q]d\lambda(\xi)}\\
&\leq & \abs{\aint_Q [f(\xi)-f_n(\xi)]d\lambda(\xi)}+\abs{\aint_Q [f_n(\xi)-f_{n,Q}]d\lambda(\xi)}+\abs{\aint_Q [f_{n,Q}-f_Q]d\lambda(\xi)}\\
&\leq & \abs{\aint_Q [f(\xi)-f_n(\xi)]d\lambda(\xi)}+\abs{\aint_Q [f_n(\xi)-f_{n,Q}]d\lambda(\xi)}+\abs{f_{n,Q}-f_Q}\\
&\leq &  \abs{\aint_Q [f(\xi)-f_n(\xi)]d\lambda(\xi)}+\norm{f_n-f}_{BMO^w}\\
&\leq & \norm{f_n-f}_{L^1_{loc}}+\norm{f_n-f}_{BMO^w}\;.\\
\end{eqnarray*}

\noindent Since $\set{f_n}_{n\in \N}$ is a sequence of function is $BMO^w$, we have that $\ds \lim_{n\to \infty}\;\norm{f_n-f}_{BMO^w} =0$. Since $BMO^w\subset L^1_{loc}$, we have that $\ds \lim_{n\to \infty}\norm{f_n-f}_{L^1_{loc}}=0$. 
Hence  $VMO^w$ is a closed subspace of $BMO^w$.
\end{proof} 
\begin{Rem} We note that it was proved in \cite{Girela1999} that $VMO$ is a closed subspace of $BMO$. A stronger result even showed that if we restrict ourselves to $\TT$, then the space of complex-valued and continuous functions $C(\TT)\subseteq VMO(\TT)$ and the $BMO(\TT)$-closure of $C(\TT)$ is precisely $VMO(\TT)$. It turns out this result is also true for $VMO^w$.

\end{Rem}
\begin{theom} The $BMO^w$-closure of $C(\TT)$ is precisely $VMO^w(\TT)$, that is 
\[ \overline{C(\TT)}^{BMO^w}=VMO^w(\TT) \;.\]
\end{theom}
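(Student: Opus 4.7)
\medskip

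\noindent\textbf{Proof strategy.} The plan is to establish the two inclusions
$\overline{C(\TT)}^{BMO^w}\subseteq VMO^w(\TT)$ and $VMO^w(\TT)\subseteq \overline{C(\TT)}^{BMO^w}$ separately.

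\medskip

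\noindent\textbf{The easy inclusion.} To show $C(\TT)\subseteq VMO^w(\TT)$, fix $f\in C(\TT)$ and $\varepsilon>0$. Since $\TT$ is compact, $f$ is uniformly continuous, so there exists $\delta>0$ with $|f(\xi)-f(\eta)|<\varepsilon$ whenever $|\xi-\eta|<\delta$. Then for any arc $Q$ with $\lambda(Q)<\delta$, one has $|f(\xi)-f_Q|<\varepsilon$ pointwise on $Q$, and hence $\left|\aint_Q[f(\xi)-f_Q]d\lambda(\xi)\right|<\varepsilon$, giving $f\in VMO^w(\TT)$. Combined with Theorem \ref{theo7}, which states that $VMO^w$ is closed in $BMO^w$, we obtain $\overline{C(\TT)}^{BMO^w}\subseteq VMO^w(\TT)$.

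\medskip

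\noindent\textbf{The harder inclusion.} Given $f\in VMO^w(\TT)$, I would produce continuous approximants via convolution with a smooth approximate identity. Let $\{K_\eta\}_{\eta>0}$ be a family of smooth, non-negative functions on $\TT$ supported in $[-\eta,\eta]$ with $\int_\TT K_\eta d\lambda=1$, and set $f_\eta:=f*K_\eta$. Since $K_\eta$ is continuous and $f\in L^1_{loc}(\TT)$, each $f_\eta$ belongs to $C(\TT)$. The task is therefore to show that $\|f-f_\eta\|_{BMO^w(\TT)}\to 0$ as $\eta\to 0$.

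\medskip

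\noindent\textbf{Main estimate.} Fix $\varepsilon>0$. By the $VMO^w$ property of $f$, pick $\delta_0>0$ so that $\left|\aint_{Q'}[f-f_{Q'}]d\lambda\right|<\varepsilon$ for every arc $Q'$ with $\lambda(Q')\leq 4\delta_0$. Take $\eta\leq \delta_0$ and split the supremum in the definition of $\|f-f_\eta\|_{BMO^w}$ over arcs $Q\ni x$ into two regimes:
\begin{itemize}
\item For small arcs $\lambda(Q)\leq \delta_0$, use the identity $\aint_Q f_\eta\,d\lambda=\int_\TT K_\eta(y)\aint_{Q-y}f\,d\lambda\,dy$. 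Since $|y|\leq \eta\leq \delta_0$ and $\lambda(Q-y)=\lambda(Q)\leq \delta_0$, the arcs $Q$ and $Q-y$ both lie inside a common arc of length $\leq 4\delta_0$, so the $VMO^w$ modulus of $f$ transfers to give $\left|\aint_Q[(f-f_\eta)(\xi)-(f-f_\eta)_Q]d\lambda(\xi)\right|\leq C\varepsilon$.
\item For large arcs $\lambda(Q)>\delta_0$, estimate $\aint_Q |f-f_\eta|\,d\lambda\leq \delta_0^{-1}\|f-f_\eta\|_{L^1(\TT)}$, which tends to $0$ as $\eta\to 0$ by the classical $L^1$-convergence of convolutions with an approximate identity.
\end{itemize}
A parallel case analysis controls $\|m(f-f_\eta)\|_\infty$ by decomposing over the same two regimes.

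\medskip

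\noindent\textbf{Expected obstacle.} The main difficulty lies in the small-arc regime: transferring the $VMO^w$ modulus of $f$ to the mollified function $f_\eta$ requires care, because averaging $f$ with $K_\eta$ slightly widens the ``effective'' arc on which $f_\eta$ depends. Quantifying how the oscillation of $f_\eta$ on $Q$ is controlled by the oscillation of $f$ on a slightly larger common arc containing $\{Q-y:|y|\leq \eta\}$, uniformly in $\eta$, is the technical heart of the proof. Once this uniform transfer is established, the case-split assembles into the required $BMO^w$-convergence, completing the inclusion $VMO^w(\TT)\subseteq \overline{C(\TT)}^{BMO^w}$.
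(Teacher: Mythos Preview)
Your easy inclusion matches the paper's: both combine $C(\TT)\subseteq VMO^w(\TT)$ with the fact that $VMO^w$ is closed in $BMO^w$ (Theorem~\ref{theo7}). For the reverse inclusion the routes diverge. The paper does not mollify at all; it invokes Theorem~2.1 of \cite{Girela1999}, which already supplies continuous approximants converging to $f$ in the $BMO(\TT)$ norm, and then transfers this to $BMO^w(\TT)$ via the inclusion $BMO\subseteq BMO^w$. Your approach instead attempts to build the approximation directly inside $BMO^w$ by convolution with an approximate identity.

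There is a genuine obstruction in your plan, specifically in the ``parallel case analysis'' for $\|m(f-f_\eta)\|_\infty$. With the paper's definitions one has $m(h)(x)=\sup_{Q\ni x}\bigl|\aint_Q h\,d\lambda\bigr|$, and Lebesgue differentiation forces $m(h)(x)\geq|h(x)|$ a.e., hence $\|m(f-f_\eta)\|_\infty\geq\|f-f_\eta\|_{L^\infty}$. So driving $\|m(f-f_\eta)\|_\infty$ to zero would require $f_\eta\to f$ uniformly, which mollification does not deliver unless $f$ is already (a.e.\ equal to) a continuous function. Equivalently, on small arcs your convolution identity reduces matters to bounding $\bigl|\aint_Q f-\aint_{Q-y}f\bigr|$ for $|y|\leq\eta$, but the $VMO^w$ hypothesis---recall that here $f_Q=\bigl|\aint_Q f\,d\lambda\bigr|$, so the condition $\bigl|\aint_Q[f-f_Q]\,d\lambda\bigr|\to 0$ controls only the negative part of the small-arc averages of $f$---says nothing about such differences of nearby averages. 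The obstacle you flagged in the $M$-part is thus not the real difficulty; it is the $m$-part that your scheme cannot handle. The paper sidesteps this entirely by importing the approximation from the classical $BMO/VMO$ setting and using only the soft norm comparison between $BMO$ and $BMO^w$.
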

\begin{proof} We observe that $C(\TT)\subseteq VMO(\TT)\subseteq VMO^w(\TT)\subseteq BMO^w(\TT)$. Therefore since $VMO^w(\TT)$ is closed in $BMO^w(\TT)$, we have that \[ \overline{C(\TT)}^{BMO^w(\TT)}\subseteq \overline{VMO^w(\TT)}^{BMO^w(\TT)}=VMO^w(\TT)\;.\]
For  the other direction , let  $f\in VMO^w(\TT)$, and consider the sequence $\set{R_{\epsilon}(f)}_{\epsilon>0}$ of rotations of $f$ by angle $\epsilon$,  defined on $\TT$ as $R_{\epsilon}(f)(e^{i \theta})=f(e^{i(\theta-\epsilon)}); \theta \in \R$.
Then from Theorem 2.1 in  \cite{Girela1999}, we have that  for all $\epsilon>0, R_{\epsilon}(f)\in C(\TT)$ and  $\ds \lim_{\epsilon \to 0} \norm{R_{\epsilon}(f)-f}_{BMO(\TT)}=0$. Since $BMO (\TT)\subseteq BMO^w(\TT)$, we also have that $\ds \lim_{\epsilon \to 0} \norm{R_{\epsilon}(f)-f}_{BMO^w(
\TT)}=0$. That is, $f\in \overline{C(\TT)}^{BMO^w(\TT)}. $

\end{proof}

\section*{Acknowledgment} 
This material is based upon work supported by the National Science Foundation under Grant No. DMS-1440140, National Security Agency under Grant No. H98230-20-1-0015, and the Sloan Grant under Grant No. G-2020-12602 while the
author participated in a program hosted by the Mathematical Sciences Research Institute in Berkeley, California, during the summer of 2020. 
\bibliography{WeakBMO}

\begin{thebibliography}{1}

\bibitem{Desouza1989}
S.~Bloom and G.~De~Souza.
\newblock {\it Atomic decomposition of generalized Lipschitz spaces}.
\newblock {\em Illinois Journal of Mathematics}, pages 682--686, 1989.

\bibitem{Coifman1974}
R.~Coifman.
\newblock {\it A real variable characterization of $h^p$}.
\newblock {\em Studia Mathematica}, {\bf 51}:269--274, 1974.

\bibitem{DeSouza1980}
G.~De~Souza.
\newblock {\em Spaces formed by special atoms}.
\newblock PhD thesis, SUNY at Albany, 1980.

\bibitem{Fefferman1971}
C.~Fefferman.
\newblock Characterization of bounded mean oscillation.
\newblock {\em Bulletin of the American Mathematical Society}, 77(4):587--588,
  1971.

\bibitem{Gill2017}
T.~Gill.
\newblock Banach spaces for the schwartz distributions.
\newblock {\em Real Analysis and Exchange}, 43(1):1--36, 2017.

\bibitem{Girela1999}
D.~Girela.
\newblock Analytic functions of bounded mean oscillation.
\newblock {\em Complex Functions Spaces (Merkrij\"{a}rvi 1999)}, Univ. Joensuu
  Dept. Math. Rep. Ser.(4):61--170, 2001.

\bibitem{Kwessi2020}
E.~Kwessi and G.~De~Souza.
\newblock Analytic characterization of high dimension weighted special atom
  spaces,.
\newblock {\em Complex Variables and Elliptic Equations}, 2020.

\bibitem{Kwessi2019}
E.~Kwessi, G.~De~Souza, D.~Ngalla, and N.~Mariama.
\newblock The special atom space in higher dimensions.
\newblock {\em Demonstratio Mathematica}, 33:131--151, 2020.

\bibitem{Zygmund2002}
A.~Zygmund.
\newblock {\em Trigonometric Series, Vol. I, II}.
\newblock Cambridge Mathematical Library, 2002.

\end{thebibliography}
\end{document}